\newtheorem{theorem}{Theorem}
\newtheorem{lemma}[theorem]{Lemma}
\newenvironment{proof}{\begin{trivlist} \item[] {\it Proof.}}{\hfill $\Box$\end{trivlist}}
\newcommand{\disc}{\operatorname{disc}}
\begin{document}

%\linenumbers

\title{New results on the coarseness of bicolored point sets}

\author{
J. M. D\'{\i}az-B\'{a}\~{n}ez\thanks{Departamento de Matem\'atica Aplicada II, 
Universidad de Sevilla, Seville, Spain. \{dbanez,iventura\}@us.es.
Partially supported by project 
FEDER MEC MTM2009-08652, and the ESF EUROCORES programme 
EuroGIGA -ComPoSe IP04-MICINN Project EUI-EURC-2011-4306.}
\and
R. Fabila-Monroy\thanks{CINVESTAV, Instituto Polit\'ecnico Nacional, Mexico. Email: ruyfabila@math.cinvestav.edu.mx. Partially supported by grant 153984 (CONACyT, Mexico).}
\and
P. P\'erez-Lantero\thanks{Escuela de Ingenier\'ia Civil en Inform\'atica, Universidad de Valpara\'{i}so, Chile. Email: pablo.perez@uv.cl. Partially supported by grant CONICYT, FONDECYT/Iniciaci\'on 11110069 (Chile).}
\and
I. Ventura\footnotemark[1]
}

\maketitle

\begin{abstract}
Let $S$ be a 2-colored (red and blue) set of $n$ points in the plane.
A subset $I$ of $S$ is an island if
there exits a convex set $C$ such that $I=C\cap S$. 
The \emph{discrepancy} of an island is the absolute value
of the number of red minus the number of blue points
it contains.
A \emph{convex partition} of $S$ is a partition of $S$ into
islands with pairwise disjoint convex hulls. The discrepancy of a convex partition
is the discrepancy of its island of minimum discrepancy. The \emph{coarseness}
of $S$ is the discrepancy of the convex partition of $S$
with maximum discrepancy. This concept was 
recently defined by Bereg et al.\ [CGTA 2013]. 
In this paper we study the following problem:
Given a set $S$ of $n$ points in general position in the plane,
how to color each of them (red or blue) such that
the resulting 2-colored point set has small coarseness?
We prove that every $n$-point set $S$ can be colored such that
its coarseness is $O(n^{1/4}\sqrt{\log n})$. This bound is almost tight
since there exist $n$-point sets such that every 2-coloring 
gives coarseness at least $\Omega(n^{1/4})$.
Additionally, we show that there exists an approximation algorithm
for computing the coarseness of a 2-colored point set, whose ratio is between $1/128$
and $1/64$, solving an open problem posted by Bereg et al.\ [CGTA 2013].
All our results consider $k$-separable islands of $S$, for some $k$,
which are those resulting from intersecting $S$ with at most $k$
halfplanes. 
\end{abstract}

\section{Introduction}\label{intro}

Let $S$ be a finite set of $n$ elements, 
and ${\cal Y}\subseteq 2^{S}$ be a family of subsets of $S$.
The tuple $(S,{\cal Y})$ is called a \emph{range space}. 
If the range space arises from point sets and geometric objects, 
then $(S,{\cal Y})$ is called a \emph{geometric range space}. 
A \emph{coloring} of $S$ is a mapping ${\cal X}: S \rightarrow \{-1, +1\}$.
We think of the elements of $S$  mapped to $-1$ as being colored \emph{blue} and 
the elements of $S$ mapped to $+1$ as being colored \emph{red}. 
Let $R$ be the red elements of $S$ and $B$ its blue elements.
For $Y \subseteq S$, let ${\cal X}(Y) := \sum_{y\in Y}{\cal X}(y)$.
The \emph{discrepancy} of $Y$ is defined as $\disc(Y)=|{\cal X}(Y)|$,
that is, the absolute value
of the number of red minus the number of blue points
that $Y$ contains.
The \emph{discrepancy} of the family ${\cal Y}$ is defined as 
$\disc({\cal Y}):=\min_{\cal X} \max_{Y\in {\cal Y}} \disc(Y)$. 

Geometric discrepancy theory
has applications in statistics, clustering, optimization, 
and computer graphics. 
See the textbooks~\cite{alexander,pach,chazelle,drmota-tichy,matousek-disc}
for problems and results in geometric discrepancy.
 
Assume from now on that $S$ is a set of $n$ points in general 
position in the plane.
A subset $I$ of $S$ is called an \emph{island} if 
there is a convex set $C$ on the plane such that $I=C\cap S$~\cite{islas}.
A \emph{convex partition} of $S$ is a partition of $S$ into islands, with
pairwise disjoint convex hulls.

Given a coloring of $S$,
the {\em discrepancy} of a convex partition $\Pi=\{S_1,$ $S_2,\ldots,S_k\}$ of $S$, 
denoted by $\disc(\Pi)$, 
is the minimum of $\disc(S_i)$ for $i=1,\ldots,k$. 
The \emph{coarseness} of $S$, denoted by $\mathcal{C}(S)$, is defined as the maximum
of $\disc(\Pi)$ over all the convex partitions $\Pi$ of $S$.
This concept of coarseness was just recently defined by Bereg et al.~\cite{coarseness},
as a parameter to measure how well blended a finite set $R$ of red points and a 
finite set $B$ of blue points are.
The smaller $\mathcal{C}(R\cup B)$ the more blended $R$ and $B$~\cite{coarseness}.

Suppose now that we have an $n$-point set $S$ in the plane and want to color
each of its elements (red or blue) such that the resulting 2-colored point set
has high coarseness. The answer to this question is trivial: take a halving
line, color the points to one side red, and color the other points blue; or even easier,
color all points of the same color, say red.

Then we post the following question: 
{\em What is the smallest coarseness of $S$
over all colorings of $S$?}

In this paper we show that for every $n$-point set $S$ in the plane 
there exists a coloring of $S$ such that the coarseness of $S$ is upper bounded
by $O\left ( n^{1/4} \sqrt{\log n} \right)$ (Theorem~\ref{thm:upper}).
We also show that there exist point sets
such that all colorings give
coarseness at least $\Omega(n^{1/4})$ (Theorem~\ref{thm:lower}).
We prove the upper bound by showing that the discrepancy
of a convex partition is
closely related to the discrepancy of a certain class
of islands of $S$, which we call $k$-\emph{separable} islands.

Given a finite point set $S$ in the plane and a coloring of $S$, 
computing the coarseness of $S$
is believed to be NP-hard~\cite{coarseness}. We also show for the first time that
there exists a polynomial-time constant approximation algorithm. Its approximation
ratio is between $1/128$ and $1/64$, depending on 
$\disc(S)=\big||R|-|B|\big|$ (Theorem~\ref{theo:approx}). Specifically,
the approximate value of the coarseness that we provide is at least
$\max\left\{\frac{\mathcal{C}(S)}{128},\frac{\mathcal{C}(S)}{64}-\disc(S)\right\}$
and at most $\mathcal{C}(S)$.
With this result, we solve an
open problem posted by Bereg et al.~\cite{coarseness}.

\section{Visiting the discrepancy theory}\label{sec:disc-theory}

In this section we recall some definitions and results 
from discrepancy theory.

The \emph{primal shatter function} $\pi_{\cal Y}(m)$ of $(S,{\cal Y})$
is a function of $m$. It is defined as the 
maximum number of subsets into which 
a subset of $S$, of at most $m$ elements, can be split (or ``shattered'')
by all the elements of ${\cal Y}$. Formally:

$$\pi_{\cal Y}(m):=\max_{A\subset S,|A|\le m}|\{Y\cap A : Y \in {\cal Y}\}| $$

The \emph{dual shatter function} $\pi_{\cal Y}^*(m)$ is obtained
by exchanging the roles of the points in $S$ with the sets
in ${\cal Y}$. $\pi_{\cal Y}^*(m)$ is defined as the maximum
number of equivalence classes on $S$ defined by an 
$m$-element subfamily ${\cal Z} \subset {\cal Y}$, where
 two elements $x$ and $y$ of $S$ are equivalent if they
belong  to the same sets of ${\cal Z}$.

The primal and dual shatter functions have been used
to give tight and almost tight upper bounds on the discrepancy of
range spaces, via the following theorems (see Chapter~5 of \cite{matousek-disc}).

\begin{theorem}{\bf (Primal shatter function bound).}\label{thm:primal}
Let $d>1$ and $C$ be constants such that $\pi_{\cal Y}(m) \le Cm^d $
for all $m \le n$. Then $\disc(\mathcal{Y})$ is upper bounded by $O\left ( n^{1/2-1/2d} \right )$.
\end{theorem}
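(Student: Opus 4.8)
The plan is to use the partial coloring method (Beck's entropy method) together with the shatter-function hypothesis, applied iteratively. First I would recall the partial coloring lemma: for a set system $(X,\mathcal{S})$ with $|X|=a$, if one assigns to each $S\in\mathcal{S}$ a target $\Delta_S>0$ with $\sum_{S\in\mathcal{S}} g(\Delta_S/\sqrt{|S|})\le a/5$ for the standard entropy function $g$ (which is exponentially small once its argument is $\gtrsim 1$ and grows only like $\log$ of its reciprocal as the argument tends to $0$), then there is a partial coloring $\chi\colon X\to\{-1,0,+1\}$ with at least $a/2$ nonzero values and $|\chi(S)|\le \Delta_S$ for all $S$. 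The role of the bound $\pi_{\mathcal{Y}}(m)\le Cm^d$ is then purely combinatorial: restricted to any ground set $X\subseteq S$ with $|X|=a\le n$, the family $\mathcal{Y}$ has at most $Ca^d$ distinct traces and — crucially — only $O(a\,s^{d-1})$ of them have size at most $s$. Grouping the ranges into size classes $\mathcal{S}_j=\{\,S:2^{j-1}<|S\cap X|\le 2^j\,\}$ gives $|\mathcal{S}_j|=O(a\,2^{j(d-1)})$, and I would then set the target of a range in class $j$ to $\Delta_j\asymp K\cdot 2^{j(1/2-1/(2d))}$ for a suitable constant $K=K(C,d)$. Substituting into the entropy bound, the contribution of class $j$ simplifies to a term whose sum over $j$ converges — this is exactly where $\tfrac12-\tfrac1{2d}>0$, i.e.\ $d>1$, is used — and for $K$ large enough the whole sum stays below $a/5$. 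The lemma then produces, on $a$ points, a partial coloring with $|\chi(S)|=O\big(|S|^{1/2-1/(2d)}\big)=O\big(a^{1/2-1/(2d)}\big)$ for every range, leaving at most $a/2$ points uncoloured.

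Next I would iterate: apply this on $X_0=S$, then on the set $X_1$ of points still coloured $0$ (so $|X_1|\le n/2$), and so on, extending the colouring at each stage for $O(\log n)$ stages until only a constant number of points remain, which are coloured arbitrarily. A range $R\in\mathcal{Y}$ accrues discrepancy $O\big(a_i^{1/2-1/(2d)}\big)$ at stage $i$, where $a_i=|X_i|\le n\,2^{-i}$, so
$$\disc(R)\;\le\;c\sum_{i\ge 0}\big(n\,2^{-i}\big)^{1/2-1/(2d)}\;=\;O\!\big(n^{1/2-1/(2d)}\big),$$
because $\sum_{i\ge 0}2^{-i(1/2-1/(2d))}$ is a convergent geometric series. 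Since this holds for every $R$, we conclude $\disc(\mathcal{Y})=O\big(n^{1/2-1/(2d)}\big)$, as claimed.

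The main obstacle is the single-stage estimate: obtaining a partial coloring whose discrepancy on a range of size $s$ is $O\big(s^{1/2-1/(2d)}\big)$ \emph{with no superfluous logarithmic factor}. This forces one to exploit the sharp size-sensitive count $O(a\,s^{d-1})$ for traces of size $\le s$ — the crude bound $Ca^d$ on the total number of traces would only yield $O(\sqrt{a\log a})$ per stage and hence $O(\sqrt{n\log n})$ overall — and to tune the size-dependent targets $\Delta_j$ so that the entropy sum over \emph{all} size classes, including the largest, stays within budget; balancing these competing constraints is the delicate point. A minor bookkeeping point is that the hypothesis $\pi_{\mathcal{Y}}(m)\le Cm^d$ is invoked only for $m\le n$, which is automatic since every ground set occurring in the iteration is a subset of $S$.
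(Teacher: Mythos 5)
This theorem is quoted by the paper from Chapter~5 of Matou\v{s}ek's \emph{Geometric Discrepancy} without proof, so your attempt can only be measured against the standard argument there. Your overall architecture matches it --- a single-stage partial coloring from the entropy method, iterated $O(\log n)$ times on the uncolored points, with the per-stage bounds summing as a convergent geometric series --- but the combinatorial claim on which your single-stage estimate rests is false. You assert that $\pi_{\mathcal{Y}}(m)\le Cm^d$ forces the trace of $\mathcal{Y}$ on an $a$-point set to contain only $O(a\,s^{d-1})$ sets of size at most $s$. Take $\mathcal{Y}$ to be the family of all $d$-element subsets of the ground set: its primal shatter function is $\sum_{i\le d}\binom{m}{i}=O(m^d)$, yet there are $\Theta(a^d)$ traces of size at most $s=d$, far exceeding $O(a\,d^{d-1})=O(a)$. (A size-sensitive count of this shape does hold for halfplanes via the $\le k$-set bound, but that is a separate geometric fact, not a consequence of the primal shatter hypothesis, and the theorem is stated for abstract range spaces.) As your own discussion of the ``crude bound'' makes clear, without strong control on the number of small traces the entropy budget cannot be met with targets $\Delta_j\asymp 2^{j(1/2-1/2d)}$, so this is not a repairable detail but the heart of the argument.

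The standard proof replaces your size-class decomposition by Haussler's packing lemma together with chaining: if $\pi_{\mathcal{Y}}(m)\le Cm^d$, then any subfamily of traces whose pairwise symmetric differences all exceed $\delta$ has at most $O\bigl((a/\delta)^d\bigr)$ members. One takes maximal $\delta$-separated nets at dyadic scales $\delta=a,a/2,a/4,\dots$, writes each range as a telescoping chain of symmetric differences between its nearest net elements at consecutive scales, and applies the entropy method to the family of all these difference sets; at scale $\delta$ there are $O\bigl((a/\delta)^{2d}\bigr)$ of them, each of size $O(\delta)$, and these counts --- unlike yours --- do follow from the hypothesis and make the entropy sum converge exactly when $d>1$, with the dominant discrepancy contribution $\sqrt{\delta}\approx a^{1/2-1/2d}$ coming from scale $\delta\approx a^{1-1/d}$. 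Your iteration and final summation are then exactly as in the standard argument; substituting the packing lemma and the chaining decomposition for the unjustified trace count would make the proof go through.
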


\begin{theorem}{\bf (Dual shatter function bound).} \label{thm:dual}
Let $d>1$ and $C$ be constants such that $\pi_{\cal Y}^*(m) \le Cm^d $
for all $m \le |{\cal Y}|$. Then $\disc(\mathcal{Y})$ is upper bounded by 
$O\left ( n^{1/2-1/2d}\sqrt{\log n} \right )$.
\end{theorem}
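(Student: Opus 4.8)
The plan is to obtain Theorem~\ref{thm:dual} through the classical bridge between the dual shatter function and combinatorial discrepancy: \emph{spanning trees of low crossing number}. I would establish two facts. (1)~If $\pi_{\cal Y}^*(m)\le Cm^d$ for all $m\le|{\cal Y}|$, then $S$ admits a spanning tree $T$ (a tree on vertex set $S$) such that every $Y\in{\cal Y}$ \emph{crosses} at most $\kappa=O(n^{1-1/d})$ edges of $T$, an edge $\{u,v\}$ being crossed by $Y$ when exactly one of $u,v$ lies in $Y$. (2)~Whenever such a $T$ exists, $\disc({\cal Y})=O(\sqrt{\kappa\log n})$. Combining the two, $\disc({\cal Y})=O\big(\sqrt{n^{1-1/d}\log n}\big)=O\big(n^{1/2-1/2d}\sqrt{\log n}\big)$, which is the claimed bound (and equals $n^{1/4}\sqrt{\log n}$ at $d=2$).

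Fact~(1) is the step I expect to be the main obstacle. I would prove it by an iterative multiplicative-weights construction: keep a positive weight $w(Y)$ on each set (initially $1$), repeatedly add to $T$ an edge $\{u,v\}$ joining two distinct current components that minimises its \emph{cost} $\sum_{Y\colon|Y\cap\{u,v\}|=1}w(Y)$, and then double $w(Y)$ for every crossed $Y$. The crux is that a cheap edge always exists. Since $\rho_w(u,v):=\sum_{Y\colon|Y\cap\{u,v\}|=1}w(Y)$ is a pseudometric on $S$, it suffices to invoke the \emph{packing lemma} --- the one place the dual shatter hypothesis is used --- which states that, writing $W=\sum_{Y}w(Y)$, any subset of $S$ whose points are pairwise at $\rho_w$-distance more than $\tau$ has at most $O\big((W/\tau)^d\big)$ points; applying it with the whole $S$ produces a pair at $\rho_w$-distance $O(Wn^{-1/d})$. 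I would prove this packing lemma by sampling a subfamily ${\cal Z}\subseteq{\cal Y}$ that includes each $Y$ with probability proportional to $w(Y)(\log N)/\tau$, where $N$ is the size of the putative far-apart point set: with positive probability every pair among those $N$ points is separated by some member of ${\cal Z}$, so those points fall into distinct equivalence classes of $S$ induced by ${\cal Z}$, which forces $N\le\pi_{\cal Y}^*(|{\cal Z}|)\le C|{\cal Z}|^d=O\big((W\log N/\tau)^d\big)$; solving for $N$ (and applying the standard refinement that removes the logarithm) gives the lemma. Finally, tracking $W$ over the $n-1$ steps --- each step multiplies it by at most $1+O(n^{-1/d})$, so $\log W$ never exceeds its initial value plus $O(n^{1-1/d})$, which caps how often any single set is crossed --- yields $\kappa=O(n^{1-1/d})$.

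For Fact~(2), short-cut $T$ into a Hamiltonian path of comparable crossing number, list $S$ as $v_1,\dots,v_n$ along this path, and colour in $O(\log n)$ rounds of partial colouring. In a round, pair up the still-uncoloured points into consecutive pairs and orient each pair by an independent fair coin; then for every $Y$ the partial value $\chi(Y)$ is a sum of at most $\kappa$ independent $\pm1$ terms --- one per path edge crossed by $Y$ --- so a Chernoff bound together with a union bound over the (polynomially many, once identical traces on the current point set are merged) relevant sets keeps $|\chi(Y)|=O(\sqrt{\kappa\log n})$, and summing the geometric series of per-round contributions preserves that estimate. With $\kappa=O(n^{1-1/d})$ this is $O(n^{1/2-1/2d}\sqrt{\log n})$. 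An alternative to the whole scheme is the direct partial-colouring (entropy) argument of Chapter~5 of~\cite{matousek-disc}, which likewise rests on a packing estimate; either way the delicate part, and the source of the extra $\sqrt{\log n}$ that distinguishes Theorem~\ref{thm:dual} from Theorem~\ref{thm:primal}, is the passage through all $\Theta(\log n)$ scales.
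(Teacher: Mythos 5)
The paper does not prove Theorem~\ref{thm:dual}; it quotes it as a known result from Chapter~5 of \cite{matousek-disc}. Your argument --- a spanning tree/path of crossing number $O(n^{1-1/d})$ built by iterative reweighting on top of the packing lemma, followed by a random colouring of consecutive pairs along the path with a Chernoff-plus-union bound over the polynomially many distinct traces --- is exactly the standard proof given in that reference, and it is correct (including your caveat that the logarithm must be removed from the packing bound to avoid losing a further $\sqrt{\log n}$).
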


For example, if ${\cal H}$ is the family of halfplanes,
the it is easy to see that $\pi_{\cal H}(m)=O(m^2)$. Thus the discrepancy
of halfplanes is $O(n^{1/4})$. It is known that this
bound is tight:

\begin{lemma}\label{lem:halfspaces}{\bf(\cite{alex_halfspaces,chaz_halfspaces})}
For arbitrarily large values of $n$, there exist sets of 
$n$ points in general position in the plane such that, given any coloring of $S$,
a halfplane exists within which one color outnumbers the other
by at least $Cn^{1/4}$, for some positive constant $C$.
\end{lemma}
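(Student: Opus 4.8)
I would prove this by exhibiting a concrete configuration and showing it defeats every coloring, via a second-moment (averaging) argument over halfplanes, in the spirit of the Fourier-analytic lower bounds of Alexander and Chazelle. The plan: take $S$ to be a sufficiently small generic perturbation of the $\sqrt n\times\sqrt n$ grid rescaled into $[-\tfrac12,\tfrac12]^2\subset B$, where $B$ is the unit disk. The perturbation puts $S$ in general position, and since perturbing the grid by $\varepsilon$ changes each quantity below by $O(\varepsilon n^2)$, taking $\varepsilon$ negligibly small is harmless. Fix a coloring ${\cal X}$ and put $\sigma:={\cal X}(S)$; if $|\sigma|\ge c\,n^{1/4}$ for the small constant $c$ to be fixed, the halfplane containing all of $S$ already has discrepancy $|\sigma|\ge c\,n^{1/4}$, so we may assume $|\sigma|<c\,n^{1/4}$.

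Let $\nu$ be the normalized rigid-motion-invariant measure on lines meeting $B$. For a line $\ell$ with sides $h^+,h^-$, write $s_\ell(p)=\pm1$ for the side containing $p$, and set $\delta(\ell):=\tfrac12\sum_{p\in S}{\cal X}(p)s_\ell(p)=\tfrac12\big({\cal X}(S\cap h^+)-{\cal X}(S\cap h^-)\big)$ and $E(\ell):=\tfrac12\sum_{p\in S}s_\ell(p)$. Since ${\cal X}(S\cap h^\pm)=\pm\delta(\ell)+\sigma/2$, we get $\max_h\disc(S\cap h)\ge\max_\ell|\delta(\ell)|-|\sigma|/2\ge\|\delta\|_{L^2(\nu)}-|\sigma|/2$, so it suffices to show $\|\delta\|_{L^2(\nu)}=\Omega(n^{1/4})$. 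By the Cauchy--Crofton formula, the $\nu$-measure of lines separating $p$ and $q$ equals $|p-q|/\pi$, hence $\int s_\ell(p)s_\ell(q)\,d\nu=1-2|p-q|/\pi$; in particular $\|E\|_{L^2(\nu)}^2=\tfrac{n^2}4-\tfrac1{2\pi}\sum_{p,q}|p-q|\le\tfrac{n^2}4$.

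The crux is an affine shift: replace $\delta$ by $\delta-\tfrac\sigma nE=\tfrac12\sum_p c_p\,s_\ell(p)$, whose coefficients $c_p:={\cal X}(p)-\sigma/n$ sum to $0$. Expanding with the Cauchy--Crofton identity kills the constant term, leaving $\|\delta-\tfrac\sigma nE\|_{L^2(\nu)}^2=-\tfrac1{2\pi}\sum_{p,q}c_pc_q|p-q|$. Since $-|x-y|$ is conditionally positive definite — its Fourier transform $\widehat{|\cdot|}(\xi)$ is a negative constant times $|\xi|^{-3}$ — this quantity is proportional to $\int_{\mathbb R^2}|\widehat\mu(\xi)|^2|\xi|^{-3}\,d\xi$ for $\mu=\sum_p c_p\delta_p$, the integral converging because $\widehat\mu(0)=\sum_p c_p=0$. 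For the grid $\widehat\mu$ is periodic; restricting the integral to one fundamental cell $[-\tfrac{\sqrt n}2,\tfrac{\sqrt n}2]^2$, using $|\xi|^{-3}\ge(n/2)^{-3/2}$ there and discarding a small fixed ball about the origin (where $|\widehat\mu(\xi)|=O(n|\xi|)$ because $S$ has bounded diameter), Parseval — $\int_{\mathrm{cell}}|\widehat\mu|^2=n\sum_p c_p^2$ with $\sum_p c_p^2=n-\sigma^2/n\ge n/2$ — leaves a contribution $\gtrsim n^{-3/2}\cdot n^2=\Omega(\sqrt n)$. Thus $\|\delta-\tfrac\sigma nE\|_{L^2(\nu)}\ge\sqrt{c_1}\,n^{1/4}$ for some $c_1>0$, and by the triangle inequality $\|\delta\|_{L^2(\nu)}\ge\sqrt{c_1}\,n^{1/4}-\tfrac{|\sigma|}n\|E\|_{L^2(\nu)}\ge\sqrt{c_1}\,n^{1/4}-\tfrac{|\sigma|}2\ge(\sqrt{c_1}-\tfrac c2)n^{1/4}$. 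Choosing $c$ small enough, $\max_h\disc(S\cap h)\ge\|\delta\|_{L^2(\nu)}-\tfrac{|\sigma|}2\ge(\sqrt{c_1}-c)\,n^{1/4}$, which is the claim. (The $\varepsilon$-perturbation changes $\|\delta-\tfrac\sigma nE\|^2_{L^2(\nu)}$ and $\|E\|^2_{L^2(\nu)}$ by $O(\varepsilon n^2)$, preserving the bound while yielding general position.)

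The main obstacle is the energy lower bound $\int|\widehat\mu|^2|\xi|^{-3}\,d\xi=\Omega(\sqrt n)$ holding uniformly over the defect measure $\mu$ of \emph{every} coloring — the heart of the argument, and the reason the grid is the right configuration: its Fourier transform is periodic, so Parseval converts the trivial $L^1$-mass identity on a fundamental cell into the needed lower bound against the weight $|\xi|^{-3}$, whereas "one-dimensional" configurations such as points on a circle (defeated by the alternating coloring) fail this. The affine-shift device is exactly what removes any balancedness hypothesis on ${\cal X}$, at the negligible cost $\tfrac{|\sigma|}n\|E\|_{L^2(\nu)}=O(|\sigma|)=o(n^{1/4})$; everything else is integral-geometric bookkeeping.
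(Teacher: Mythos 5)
The paper offers no proof of this lemma: it is imported from the cited literature (Alexander's $\Omega(n^{1/4})$ lower bound for halfplane discrepancy and its later simplifications), so there is no in-paper argument to measure yours against. What you have written is a correct reconstruction of the Fourier-analytic/integral-geometric proof of that cited result, and all the essential ingredients are in place: the reduction of $\max_h \disc(S\cap h)$ to $\|\delta\|_{L^2(\nu)}$ at a cost of $|\sigma|/2$; the Cauchy--Crofton identity $\int s_\ell(p)s_\ell(q)\,d\nu = 1-2|p-q|/\pi$; the affine shift that makes the coefficients sum to zero so only the conditionally positive definite distance energy survives; and the Parseval step converting $\sum_p c_p^2\ge n/2$ into the $\Omega(\sqrt n)$ energy bound. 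You also correctly identify the crux (the energy bound must hold uniformly over colorings, which is exactly what periodicity of $\widehat\mu$ for the grid delivers) and handle the perturbation to general position properly, since the $O(\varepsilon n^2)$ error is uniform over colorings.

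One step is wrong as literally written, though trivially repairable. For spacing $1/\sqrt n$ the dual lattice is $2\pi\sqrt n\,\mathbb{Z}^2$, so the fundamental cell is $[-\pi\sqrt n,\pi\sqrt n]^2$ and Parseval reads $\int_{\mathrm{cell}}|\widehat\mu|^2=4\pi^2 n\sum_p c_p^2$; your window $[-\tfrac{\sqrt n}{2},\tfrac{\sqrt n}{2}]^2$ is a strict sub-region of that cell, on which Parseval gives no lower bound at all --- indeed for the checkerboard coloring $|\widehat\mu|^2$ concentrates near the cell corners $(\pm\pi\sqrt n,\pm\pi\sqrt n)$, entirely outside your window. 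Using the true cell costs only constants, since $|\xi|^{-3}\ge (\pi\sqrt{2n})^{-3}$ there as well, so the $\Omega(\sqrt n)$ conclusion stands. Two further cosmetic points: the Fourier transform of $|x|$ in the plane is a finite-part distribution, so it is cleaner to obtain the identity $-\sum_{p,q}c_pc_q|p-q| = c\int|\widehat\mu(\xi)|^2|\xi|^{-3}\,d\xi$ (for $\sum_p c_p=0$) directly from the absolutely convergent representation $|x| = c'\int_{\mathbb{R}^2}\bigl(1-\cos(\xi\cdot x)\bigr)|\xi|^{-3}\,d\xi$, which also settles convergence at the origin; and when discarding the ball $B_\rho$ one should verify $\int_{B_\rho}|\widehat\mu|^2\le \pi\rho^4 n^2$ is at most half of $\int_{\mathrm{cell}}|\widehat\mu|^2\ge 2\pi^2 n^2$, which your bound $|\widehat\mu(\xi)|=O(n|\xi|)$ gives for a fixed $\rho\le 1$. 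With these repairs the argument is a valid self-contained proof of a statement the paper only cites.
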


From Lemma~\ref{lem:halfspaces} we prove
the following Theorem:

\begin{theorem} \label{thm:lower}
For arbitrarily large values of $n$, there exist sets of 
$n$ points in general position  in the plane with
coarseness at least $Cn^{1/4}$ for some positive constant $C$.
\end{theorem}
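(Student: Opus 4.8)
The plan is to derive Theorem~\ref{thm:lower} directly from Lemma~\ref{lem:halfspaces} by observing that a single halfplane of high discrepancy forces every convex partition to contain a low-discrepancy island. First I would take, for arbitrarily large $n$, a point set $S$ of the kind guaranteed by Lemma~\ref{lem:halfspaces}: for any coloring of $S$ there is a halfplane $h$ with $\disc(S\cap h)\ge Cn^{1/4}$. I claim $\mathcal{C}(S)\ge \frac{C}{2}n^{1/4}$ (after possibly shrinking the constant). Fix an arbitrary coloring of $S$ and let $\Pi=\{S_1,\dots,S_k\}$ be any convex partition; I want to show $\disc(\Pi)=\min_i\disc(S_i)$ is large, but this is false in general — so instead the argument must go the other way: we want to exhibit, for a \emph{well-chosen} $S$, a convex partition of high discrepancy, namely the one induced by a halving line.

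Concretely, the cleaner route is as follows. Take $S$ as in Lemma~\ref{lem:halfspaces} and fix any coloring ${\cal X}$ of $S$. By the lemma there is a line $\ell$ and a halfplane $h^+$ bounded by $\ell$ with $\disc(S\cap h^+)\ge Cn^{1/4}$. Let $h^-$ be the complementary (closed) halfplane; then $S\cap h^+$ and $S\cap h^-$ partition $S$ into two islands with disjoint convex hulls, so $\Pi_0=\{S\cap h^+,\,S\cap h^-\}$ is a convex partition. We have $\disc(S\cap h^+)+\disc(S\cap h^-)\ge \disc(S\cap h^+)\ge Cn^{1/4}$ is not quite enough, since we need \emph{both} parts to have large discrepancy. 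To fix this, note $\disc(S\cap h^-)=\big|{\cal X}(S)-{\cal X}(S\cap h^+)\big|\ge \disc(S\cap h^+)-|{\cal X}(S)|$; if $|{\cal X}(S)|=\disc(S)$ is small this already works, and in any case we may first pass to a halving line, coloring-independently, reducing to the situation where the two sides of $\ell$ each carry exactly $n/2$ points of $S$ — then by a translation/rotation argument (or by applying the lemma to one half) we still find inside one half a halfplane of discrepancy $\Omega(n^{1/4})$, and the matching half inherits a comparable bound. I would spell out this balancing step carefully: the key identity is ${\cal X}(S\cap h^+)+{\cal X}(S\cap h^-)={\cal X}(S)$, so the two signed sums differ by a fixed amount, and choosing $\ell$ to be a halving line of $S$ while invoking the lemma on the stronger "for every coloring, some halfplane is unbalanced" guarantee lets us arrange both islands to have $\Omega(n^{1/4})$ discrepancy.

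The main obstacle is precisely this balancing: Lemma~\ref{lem:halfspaces} only promises \emph{one} unbalanced halfplane, whereas $\disc(\Pi)$ is a \emph{minimum} over the parts, so a partition into two halfplanes where one is highly unbalanced and the other nearly balanced would only give $\disc(\Pi)=O(1)$. The resolution is that if $h^+$ has surplus $\ge Cn^{1/4}$ of red over blue, then rotating $\ell$ slightly (or choosing a parallel halving line) moves points across $\ell$ one at a time, changing each side's discrepancy by $1$ per point, so by a discrete intermediate-value argument there is a position of $\ell$ for which \emph{both} $S\cap h^+$ and $S\cap h^-$ have discrepancy at least, say, $\frac{C}{3}n^{1/4}$ — as long as the total discrepancy $\disc(S)$ is not itself forcing an imbalance, and when it is, $\mathcal{C}(S)\ge\disc(S)$ trivially (the partition with one part being all of $S$), so we are done in that case too. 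Since the coloring was arbitrary, $\mathcal{C}(S)\ge \frac{C}{3}n^{1/4}$, which is the desired bound with a relabeled constant. $\Box$
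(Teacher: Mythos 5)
Your proposal is correct and, once the exploratory detours are stripped away, is essentially the paper's own proof: take $S$ from Lemma~\ref{lem:halfspaces}, and either $\disc(S)$ is already $\Omega(n^{1/4})$ (so the trivial partition $\{S\}$ gives the bound) or the two-part partition $\{S\cap H,\, S\setminus H\}$ has both parts of discrepancy at least $Cn^{1/4}-\disc(S)=\Omega(n^{1/4})$. The halving-line, rotation, and discrete intermediate-value machinery you sketch is unnecessary; the identity ${\cal X}(S\cap H)+{\cal X}(S\setminus H)={\cal X}(S)$ together with that single case split on the size of $\disc(S)$ already finishes the argument.
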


\begin{proof}
Assume that $S$ is a set of points as in Lemma~\ref{lem:halfspaces},
and consider any coloring of $S$.
Thus, there exists a halfplane $H$ such that $\disc(S\cap H)\ge C' n^{1/4}$
for some positive constant $C'$.
Suppose that the trivial convex partition $\{S\}$ has discrepancy
at most $(C'/2)n^{1/4}$, as otherwise we are done with $C:=C'/2$.
Then we have that $\disc(S\setminus H)\ge (C'/2) n^{1/4}$ and
the convex partition of $\Pi:=\{S \cap H, S \setminus H\}$ of $S$ 
has discrepancy $\disc(\Pi)\ge (C'/2)n^{1/4}$. Thus, the coarseness
of $S$ is at least $Cn^{1/4}$, with $C:=C'/2$.
\end{proof}

\section{$k$-separable islands and convex partitions}\label{sec:k-sep}

An island $I$ of $S$ is $k$-{\em separable} if it can be 
separated from $S\setminus I$ with at most $k$ halfplanes,
that is, there exist halfplanes $H_1,H_2,\ldots,H_t$ $(1\leq t\leq k)$, 
such that $I=S \cap (H_1\cap H_2 \cap \ldots H_t)$. 
We denote the family of all the $k$-separable islands of $S$ with ${\cal I}_k$.
For constant $k$, we upper bound the discrepancy of ${\cal I}_k$
by using its dual shatter function. Namely, we show that 
$\pi_{{\cal I}_k}^*(m)=O(m^2)$. We should point out that Dobkin and Gunopulos~\cite{dobkin1995}
proved the same asymptotic upper bound, but 
our proof considers more details and explicitly gives the constant 
hidden in the big-O notation.

%We first consider points in convex position.

\begin{lemma}\label{lem:conv}
If $k$ is a positive integer and $S$ a set of $n$ points in convex and general
position in the plane then $\pi_{{\cal I}_k}^*(m)\le 4km$.
\end{lemma}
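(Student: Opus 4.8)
The plan is to bound the number of equivalence classes that an $m$-element subfamily ${\cal Z}=\{I_1,\dots,I_m\}\subseteq{\cal I}_k$ can induce on the point set $S$, when $S$ is in convex position. The key structural observation is that each $k$-separable island $I_j$ is the intersection of $S$ with at most $k$ halfplanes, and since $S$ lies on a convex curve, the boundary line of each halfplane meets this curve in at most two points, hence cuts the cyclic sequence of $S$ into at most two arcs. Therefore each halfplane intersects $S$ in a set that is the union of at most one cyclic arc (one ``interval'' in the circular order of $S$), and an intersection of $t\le k$ such halfplanes yields a set that is the union of at most $k$ cyclic arcs — equivalently, its boundary ``switches'' between inside and outside at most $2k$ times around the convex curve.

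First I would fix the cyclic order $p_1,p_2,\dots,p_n,p_1$ of the points of $S$ along their convex hull, and think of the $n$ ``gaps'' between consecutive points as the $n$ positions on a circle. For a single island $I_j$, the condition ``$p_i\in I_j$'' changes (between true and false) at most $2k$ times as $i$ runs around the cycle, because $I_j$ restricted to the convex curve is a union of at most $k$ arcs. I would record these ``breakpoints'' of $I_j$: there are at most $2k$ gaps at which membership in $I_j$ toggles. Collecting the breakpoints over all $m$ islands $I_1,\dots,I_m$ gives at most $2km$ distinguished gaps on the circle. These distinguished gaps partition the cyclic sequence of points into at most $2km$ contiguous blocks, and within each block no island changes its membership status — so all points in a block lie in exactly the same members of ${\cal Z}$, i.e. they are all in one equivalence class. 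Hence the number of equivalence classes is at most the number of blocks, which is at most $2km$. A small care: if $m=0$ or if an island has no breakpoint this degenerates gracefully, and the bound $2km$ is below the claimed $4km$, leaving comfortable slack; I would simply state the cleaner $2km$ bound if it goes through, or absorb the handling of degenerate/trivial cases (e.g. empty islands, or $n<2$) into the factor $4$.

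The main step to be careful about is the claim that a halfplane intersects a convex-position point set in a single cyclic arc: this uses that a line crosses a convex polygon's boundary in at most two points, so on one side of the line the points of $S$ form one contiguous run in the cyclic order (possibly empty or all of $S$). From this, $t$ halfplanes give an intersection whose indicator, read cyclically, is an AND of $t$ ``single-arc'' indicators; an AND of functions each having at most $2$ sign changes around the circle has at most $2t\le 2k$ sign changes — this is the elementary fact I would invoke (each conjunct can turn the indicator off in at most one place and back on in at most one place per conjunct). I would spell this out in one or two sentences rather than a formal induction. Once that is in hand, the counting argument above closes the proof, and the only remaining obstacle is bookkeeping the constant so that $4km$ is safely an upper bound in all edge cases; since our honest bound is $2km$, the factor of $2$ of headroom makes this routine.
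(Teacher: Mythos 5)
Your proof is correct and follows essentially the same route as the paper's: both exploit that on a convex-position set each halfplane cuts out a single cyclic arc, so a $k$-separable island is a union of at most $k$ arcs, and then count the induced breakpoints around the cycle. Your bookkeeping (counting the at most $2km$ toggling gaps and the blocks they delimit) is in fact slightly cleaner and yields the sharper bound $\pi_{{\cal I}_k}^*(m)\le 2km$, comfortably within the claimed $4km$.
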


\begin{proof}
Assume that $S$ is sorted clockwise around its convex hull. 
Note that any $k$-separable island must consist of at most $k$ intervals
of consecutive points of $S$ in this order. Consider a family of $m$,
$k$-separable islands. There are at most $2km$ points of $S$ that are the endpoints
of any such intervals. 
There are at most $2km$ regions into which the remaining points
(which are not endpoints of any interval) can lie.
Thus, in total there are at most $4km$ equivalence classes.
\end{proof}

\begin{lemma}\label{lem:general}
If $k$ is a positive integer and $S$ a set of $n$ points in general position 
in the plane then $\pi_{{\cal I}_k}^*(m)\le (k^2+4k)m^2$.
\end{lemma}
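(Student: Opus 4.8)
The plan is to reduce the general-position case to the convex-position case treated in Lemma~\ref{lem:conv}, by analyzing how a $k$-separable island can cut the plane. A $k$-separable island $I$ of $S$ is obtained by intersecting $S$ with an intersection of at most $k$ halfplanes, i.e.\ with a convex polygon $P_I$ having at most $k$ edges; each edge lies on a line, so each island is described by at most $k$ lines together with a choice of side for each. Given a family $\mathcal{Z}$ of $m$ such islands, we therefore have at most $km$ lines in total, call this set $\mathcal{L}$.

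First I would bound the number of equivalence classes that the arrangement of $\mathcal{L}$ induces on $S$, where two points of $S$ are equivalent if they lie on the same side of every line of $\mathcal{L}$ — equivalently, they lie in the same cell (face) of the arrangement $\mathcal{A}(\mathcal{L})$. A set of $km$ lines produces an arrangement with $O((km)^2)$ faces; being careful with constants, $km$ lines create at most $\binom{km}{2}$ vertices, at most $km + \binom{km}{2}$ edges along each… rather, the standard bound is that $\ell$ lines partition the plane into at most $1 + \ell + \binom{\ell}{2}$ regions. With $\ell = km$ this is at most $\tfrac12 (km)^2 + \tfrac12 km + 1 \le (k^2)m^2$-ish, but I must push the bookkeeping to land inside $(k^2+4k)m^2$. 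The key observation is that the equivalence relation defined by $\mathcal{Z}$ on $S$ is a \emph{coarsening} of the one defined by the arrangement $\mathcal{A}(\mathcal{L})$: if two points lie in the same cell of $\mathcal{A}(\mathcal{L})$ then they lie on the same side of every line, hence inside exactly the same polygons $P_I$, hence in the same island-membership class. Therefore $\pi^*_{\mathcal{I}_k}(m)$ for this family is at most the number of cells of $\mathcal{A}(\mathcal{L})$, which is at most $1 + km + \binom{km}{2}$.

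The remaining step is purely to check that $1 + km + \binom{km}{2} \le (k^2+4k)m^2$ for all positive integers $k$ and $m$ (using $m\ge 1$, $k\ge 1$): we have $\binom{km}{2} \le \tfrac12 k^2 m^2$, and $1 + km \le 2km \le 2k m^2$, so the total is at most $\tfrac12 k^2 m^2 + 2k m^2 \le (k^2 + 4k)m^2$, with comfortable slack. Taking the maximum over all $m$-element subfamilies $\mathcal{Z}$ and all $A\subseteq S$ gives the claimed bound $\pi^*_{\mathcal{I}_k}(m)\le (k^2+4k)m^2$.

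The main obstacle, such as it is, is bookkeeping rather than ideas: one must make sure the reduction ``$k$ halfplanes per island $\Rightarrow$ $km$ lines total, and cells of the line arrangement refine the island classes'' is stated cleanly (in particular, that a $t$-separable island with $t<k$ only contributes \emph{fewer} lines, so $km$ is a valid upper bound on $|\mathcal{L}|$), and that the constant in the face-count of a line arrangement is tracked precisely enough to fit under $(k^2+4k)m^2$. A secondary point to handle with care: points of $S$ that happen to lie \emph{on} one of the lines of $\mathcal{L}$ do not occur because $S$ is in general position and the separating lines can be chosen to avoid $S$, so every point of $S$ lies in the interior of some cell; this is exactly why general position is invoked in the hypothesis.
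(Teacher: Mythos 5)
Your proof is correct, but it takes a genuinely different (and cleaner) route than the paper. The paper splits the points of $S$ into two groups: points that are vertices of the convex hull of some island of $\mathcal{F}$ (these are in convex position for each fixed island, so Lemma~\ref{lem:conv} bounds their classes by $4k(m-1)$ per island, hence $4km^2$ in total), and the remaining points, whose classes are bounded by the $k^2m^2$ cells of the arrangement of the separating lines; adding the two counts gives $(k^2+4k)m^2$. You instead observe that, after choosing for each island a witness set of at most $k$ separating lines that avoid $S$ (possible since $S$ is finite), \emph{every} equivalence class is a union of cells of the arrangement of all $\le km$ lines, so the single bound $1+km+\binom{km}{2}\le(k^2+4k)m^2$ suffices. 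This makes Lemma~\ref{lem:conv} unnecessary for this lemma, avoids the paper's somewhat delicate handling of hull vertices, and in fact yields the sharper constant $\tfrac12k^2+2k$ in place of $k^2+4k$; the only thing the paper's route ``buys'' is that it never has to argue that the separating lines can be perturbed off $S$. Two small inaccuracies in your write-up, neither fatal: the ability to choose lines missing $S$ comes from finiteness of $S$ (perturb each halfplane without changing its intersection with $S$), not from general position, so your closing claim that this is ``exactly why general position is invoked'' is off; and the maximization in the dual shatter function is only over $m$-element subfamilies $\mathcal{Z}$, not over subsets $A\subseteq S$ (that is the primal version).
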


\begin{proof}
Let $\mathcal{F}$ be a family of $m$, $k$-separable islands on $S$.
We first consider the points lying in the convex hull 
of some island $I$  of $\mathcal{F}$.
Note that the convex hull of $I$ is a set of points in convex position. By
Lemma~\ref{lem:conv} these points are in at most $4k(m-1)$ different 
equivalence classes (when considering the other $m-1$ islands in $\mathcal{F}$).
Thus, in total there at most $4km^2$ equivalence classes for points in the boundary
of some island in $\mathcal{F}$. We now bound the number of equivalence
classes for points not lying in the boundary of any island. 
Each such equivalence class is contained in a cell of 
the line arrangement defined by the following
set of lines $\mathcal{L}$.
For each island $I \in \mathcal{F}$, let $L_I$ be the set
of at most $k$ lines that separate $I$ from $S\setminus{I}$.
Set $\mathcal{L}:=\cup_{I \in \mathcal{F}} L_I$. 
The line arrangement defined by $\mathcal{L}$ has at most 
$|\mathcal{L}|^2=k^2m^2$ cells. 
%By setting $C:=k^2+4k$ 
The result thus follows.
\end{proof}

Using the dual shatter function bound and Lemma~\ref{lem:general}
we obtain the following theorem:

\begin{theorem}\label{thm:sep} 
Let $k$ be a positive constant and $S$ a set
of $n$ points in general position in the plane. The discrepancy
of the family $\mathcal{I}_k$ of the $k$-separable islands of $S$ 
is upper bounded by $O\left ( n^{1/4}\sqrt{\log n} \right )$.
\end{theorem}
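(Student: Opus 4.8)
The plan is to apply the dual shatter function bound (Theorem~\ref{thm:dual}) directly to the range space $(S,\mathcal{I}_k)$. By Lemma~\ref{lem:general}, we have $\pi_{\mathcal{I}_k}^*(m)\le (k^2+4k)m^2$ for all $m\le|\mathcal{I}_k|$. Since $k$ is a positive constant, the coefficient $C:=k^2+4k$ is a constant, and the exponent is $d:=2>1$. Thus the hypothesis of Theorem~\ref{thm:dual} is met with these choices of $C$ and $d$.

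Plugging $d=2$ into the conclusion of Theorem~\ref{thm:dual}, the discrepancy of $\mathcal{I}_k$ is upper bounded by $O\!\left(n^{1/2-1/(2d)}\sqrt{\log n}\right)=O\!\left(n^{1/2-1/4}\sqrt{\log n}\right)=O\!\left(n^{1/4}\sqrt{\log n}\right)$, which is exactly the claimed bound. This completes the argument.

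There is essentially no obstacle here: the theorem is a direct corollary, obtained by feeding the bound of Lemma~\ref{lem:general} into Theorem~\ref{thm:dual}. The only point requiring a word of care is checking that the hypothesis of Theorem~\ref{thm:dual} is stated for all $m\le|\mathcal{Y}|$ (here $|\mathcal{Y}|=|\mathcal{I}_k|$), whereas Lemma~\ref{lem:general} gives the bound for all $m$; so the hypothesis holds a fortiori. One should also note that the implied constant in the $O(\cdot)$ depends on $k$, which is harmless since $k$ is assumed constant.
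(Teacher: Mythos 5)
Your proposal is correct and matches the paper exactly: the paper derives Theorem~\ref{thm:sep} by combining the dual shatter function bound (Theorem~\ref{thm:dual}, with $d=2$) with the estimate $\pi_{\mathcal{I}_k}^*(m)\le (k^2+4k)m^2$ from Lemma~\ref{lem:general}. Your additional remarks about the range of $m$ and the dependence of the hidden constant on $k$ are accurate and harmless.
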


Note that although $k$-separable islands have small discrepancy, this
is not the case for islands in general. For example for any coloring of a set of $n$
points in convex position in the plane there always exists an island
with discrepancy at least $n/2$. It can
be shown that in this case the primal and dual shatter function are
equal to $2^m$. 

We now show that every convex
partition must contain a $5$-separable island. This follows immediately from:

\begin{lemma}\label{lem:edels}{\bf(Theorem~2 in~\cite{edels})}
A collection of $n$ compact, convex, and pairwise disjoint sets in 
the plane may be covered with $n$ non-overlapping convex polygons
with a total of not more than $6n-9$ sides.
\end{lemma}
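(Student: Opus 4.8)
The plan is to realize the required covering as a \emph{subdivision} of the plane into $n$ convex cells, one containing each set, and then bound the total number of sides by Euler's formula. Writing $C_1,\dots,C_n$ for the compact, convex, pairwise disjoint sets, I would first reduce the covering problem to a tiling problem: instead of merely enclosing each $C_i$ in a convex polygon, I look for a partition of the whole plane into convex regions $P_1,\dots,P_n$ with $C_i\subseteq P_i$. This loses nothing for an upper bound, since any non-overlapping covering can be enlarged to a tiling without shrinking any polygon, and a tiling is what makes the count clean. The quantity to bound is then $\sum_i(\text{number of sides of }P_i)$, where an edge shared by two cells is counted twice, so it equals $2E$ with $E$ the number of edges of the subdivision.

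To build the subdivision I would process the sets incrementally, inserting them one at a time and maintaining a convex partition of the plane; when inserting $C_j$ I carve out a convex cell for it by cutting with lines that separate $C_j$ from each of the sets whose current cell it meets, using the fact that any two disjoint compact convex sets admit a strictly separating line. The heart of the counting is a single structural observation: the common boundary between two \emph{convex} cells must be one straight segment, since a bent shared boundary would be reflex as seen from one of the two cells. Consequently every vertex of the subdivision has degree at least $3$. Compactifying by adding the point at infinity and applying Euler's formula $V-E+F=2$ with $F=n$ gives $E=V+n-1$; handshaking with the degree-$\ge 3$ condition at the finite vertices, while charging the unbounded rays to the point at infinity, yields $V\le 2n-2-r$, where $r$ is the number of unbounded edges. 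Because a convex tiling of the plane into $n\ge 2$ cells forces $r\ge 2$, this gives $V\le 2n-4$ and hence a total of $2E\le 6n-10\le 6n-9$ sides, with the small and degenerate values of $n$ checked by hand as base cases.

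The hard part will be the construction step rather than the counting. A newly inserted set need not lie inside a single current cell: being merely disjoint from the others, it may stretch across several cells at once, so producing a convex region for it can require re-cutting several existing cells while keeping every one of them convex. Controlling the number of extra sides these re-cuts create — and, relatedly, ruling out configurations in which a group of sets admits no single separating line — is the delicate point, and it is exactly what pins down the leading constant. I expect the main technical effort to go into an insertion order (for instance a sweep, or an ordering by extreme supporting directions) for which each set interacts only locally, so that the amortized cost per set is at most $6$ and the global bound $6n-9$ follows; squeezing out the exact additive constant $-9$, as opposed to a weaker $O(n)$ estimate, is where the careful treatment of the unbounded cells and the base cases becomes essential.
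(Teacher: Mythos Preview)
This lemma is not proved in the paper: it is quoted as Theorem~2 of~\cite{edels} and invoked as a black box in the one-line proof of Theorem~\ref{thm:5sep}. There is therefore no argument in the paper for you to compare against.

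Regarding the attempt itself, the overall strategy --- build a convex subdivision with one cell per $C_i$ and bound the total number of sides by $2E$ using Euler's formula together with the observation that two adjacent convex cells share a single straight segment, forcing every finite vertex to have degree at least~$3$ --- is indeed the method of the cited reference, and your Euler computation is correct. Two points deserve flagging, however. Your reduction sentence (``any non-overlapping covering can be enlarged to a tiling without shrinking any polygon'') is both unnecessary and argued in the wrong direction: a tiling \emph{is} a covering, so exhibiting one good tiling already yields the upper bound, whereas enlarging a given covering to a tiling can certainly add sides, so the claim as stated is false. More importantly, you explicitly defer the construction of the subdivision (``the hard part'') to an unspecified insertion order that you only \emph{expect} to behave well; that construction is precisely the substance of~\cite{edels}, so what you have written is a correct plan rather than a proof.
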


\begin{theorem}\label{thm:5sep}
Every convex partition $\Pi$ of $S$ has a $5$-separable island.
\end{theorem}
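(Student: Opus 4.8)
The plan is to deduce Theorem~\ref{thm:5sep} directly from Lemma~\ref{lem:edels} by a counting argument on the total number of polygon sides. Let $\Pi=\{S_1,S_2,\ldots,S_n\}$ be a convex partition of $S$; the convex hulls $\CH(S_1),\ldots,\CH(S_n)$ form a collection of $n$ compact, convex, pairwise disjoint sets in the plane. By Lemma~\ref{lem:edels} there exist $n$ non-overlapping convex polygons $P_1,\ldots,P_n$, with $\CH(S_i)\subseteq P_i$, whose total number of sides is at most $6n-9$.

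The key step is then averaging: since $\sum_{i=1}^n (\text{number of sides of }P_i) \le 6n-9 < 6n$, some polygon $P_j$ has at most $5$ sides. Extend the $\le 5$ sides of $P_j$ to full lines; these lines bound at most $5$ halfplanes whose intersection contains $P_j$ (and hence $\CH(S_j)$ and $S_j$) but is disjoint from every other $P_i$, and therefore from every other $S_i$. Thus $S_j = S \cap (H_1 \cap \cdots \cap H_t)$ for some $t \le 5$, which is exactly the definition of a $5$-separable island. Hence $\Pi$ contains a $5$-separable island, namely $S_j$.

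The only point that needs a little care — and the step I expect to be the main (mild) obstacle — is the handling of degenerate small polygons: if some $P_j$ is a segment or a point (a ``degenerate'' convex polygon with fewer than $3$ sides, which could in principle arise when $S_j$ is a single point or a collinear set, though the latter is excluded by general position), one should observe that such an $S_j$ is trivially $k$-separable for small $k$, so the conclusion still holds. One should also confirm that a convex polygon with exactly $s$ sides is the intersection of exactly $s$ halfplanes, so ``at most $5$ sides'' really gives ``at most $5$ halfplanes.'' With these routine observations dispatched, the theorem follows.

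Note that combining Theorem~\ref{thm:5sep} with Theorem~\ref{thm:sep} (applied with $k=5$) immediately yields that $\disc(\Pi)$ is $O(n^{1/4}\sqrt{\log n})$ for the convex partition $\Pi$ achieving the coarseness, since the minimum-discrepancy island of $\Pi$ has discrepancy no larger than the $5$-separable island guaranteed above; this is the bridge to Theorem~\ref{thm:upper}.
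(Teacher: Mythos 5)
Your proof is correct and follows exactly the paper's argument: apply Lemma~\ref{lem:edels} to the pairwise disjoint convex hulls of the parts, use the $6n-9$ bound on total sides to find an averaging witness $P_j$ with at most $5$ sides, and conclude that the enclosed part is a $5$-separable island. The extra remarks on degenerate polygons and the bridge to Theorem~\ref{thm:upper} are fine but not needed beyond what the paper already does.
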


\begin{proof}
Let $\Pi:=\{S_1,S_2\dots,S_m\}$. Using Lemma~\ref{lem:edels},
there exist non-overlapping convex polygons $C_1,C_2,\dots, C_m$ with
a total of no more than $6m-9$ sides, such that for each $i=1,\ldots,m$ the 
convex hull of $S_i$ is enclosed
by $C_i$. Thus, one of these convex polygons has at most $5$ sides and the
enclosed island is thus a $5$-separable island.
\end{proof}

We arrive at our main result by combining 
Theorems~\ref{thm:5sep} and~\ref{thm:sep}.

\begin{theorem}\label{thm:upper}
For every set $S$ of $n$ points in general position
in the plane there exists a coloring such that
the coarseness of $S$ is upper bounded by $O(n^{1/4}\sqrt{\log n})$.
\end{theorem}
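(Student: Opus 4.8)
The plan is to apply the discrepancy bound from Theorem~\ref{thm:sep} with the specific value $k=5$, and then use Theorem~\ref{thm:5sep} to transfer that bound from $5$-separable islands to arbitrary convex partitions. Concretely, by Theorem~\ref{thm:sep} the family $\mathcal{I}_5$ of $5$-separable islands of $S$ satisfies $\disc(\mathcal{I}_5)=O(n^{1/4}\sqrt{\log n})$; unpacking the definition of the discrepancy of a family, this means there is a coloring ${\cal X}$ of $S$ such that \emph{every} $5$-separable island $I$ has $\disc(I)=O(n^{1/4}\sqrt{\log n})$. I would fix such a coloring and claim it witnesses the theorem.

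It then remains to bound the coarseness of $S$ under this coloring. Let $\Pi=\{S_1,\dots,S_m\}$ be any convex partition of $S$. By Theorem~\ref{thm:5sep}, $\Pi$ contains some $5$-separable island $S_i$. Since $\disc(\Pi)$ is the minimum of $\disc(S_j)$ over $j=1,\dots,m$, we have $\disc(\Pi)\le \disc(S_i)=O(n^{1/4}\sqrt{\log n})$, where the last estimate is exactly the guarantee our chosen coloring provides for every $5$-separable island. As $\Pi$ was arbitrary, taking the maximum over all convex partitions gives $\mathcal{C}(S)=O(n^{1/4}\sqrt{\log n})$, as desired.

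There is really no hard step here: the theorem is a clean combination of the two preceding results, and the only thing to be careful about is the logical quantifier order. The point is that the coloring is chosen \emph{first} to simultaneously control all $5$-separable islands (this is what ``discrepancy of the family'' buys us), and only afterwards do we run over convex partitions. If one instead tried to choose a good coloring per partition the argument would collapse, so the one subtlety worth stating explicitly in the write-up is that Theorem~\ref{thm:sep} yields a single coloring good against the entire family $\mathcal{I}_5$ at once. I would also remark that since $5$ is a fixed constant, Theorem~\ref{thm:sep} applies with its constant hidden in the big-$O$ depending only on $k=5$, so no issue arises from $k$ being non-constant.
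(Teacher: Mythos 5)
Your proof is correct and is exactly the combination of Theorems~\ref{thm:sep} and~\ref{thm:5sep} that the paper intends (the paper omits the details, stating only that the result follows by combining the two). Your remark about the quantifier order --- fixing one coloring that controls all of $\mathcal{I}_5$ before ranging over partitions --- is the right point to make explicit.
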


\section{Approximation}\label{sec:approximation}

%%%%%%%%%%%%%%%%%%%%%%%%%%%%%%%%%%%%%%%%%%%%%%%%%%%%%%%%%%%%%%%%%%%%%%%%%%%%%%%%%%%%%%%%%%%%%
Let $S=R\cup B$ be finite point set in the plane in general position, and
let ${\cal X}$ be a coloring of $S$,
where $R$ is the set of points colored red
and $B$ the set of points colored blue. 
Let $r:=|R|$, $b:=|B|$, and $D_k:=\max_{I\in\mathcal{I}_k}\disc(I)$. 
For every set $X\subseteq\mathbb{R}^2$, let $\overline{X}$ denote the complement
of $X$, that is, $\overline{X}=\mathbb{R}^2\setminus X$.

In this section we show that the value of $D_2$ is a constant approximation
for the coarseness of $S$. We start with some lemmas before arriving to the
final result.

\begin{lemma}\label{lem:1-island}
Let $t$ be an integer. If there exists an island $I\in\mathcal{I}_1$ of $S$
such that $\disc(I)\ge t$,
then there exists a convex partition $\Pi$ of $S$ such that 
$$\disc(\Pi)\ge\max\left\{t/2,t-|r-b|\right\}.$$
\end{lemma}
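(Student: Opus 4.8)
We are given an island $I \in \mathcal{I}_1$, i.e. $I = S \cap H$ for a single halfplane $H$, with $\disc(I) \ge t$. We want to produce a convex partition $\Pi$ with $\disc(\Pi) \ge \max\{t/2, t - |r-b|\}$. The natural candidate is the two-part partition $\Pi = \{I, S \setminus I\}$: this is a valid convex partition since $I = S \cap H$ and $S \setminus I = S \cap \overline{H}$ lie in disjoint halfplanes, hence have disjoint convex hulls. By definition $\disc(\Pi) = \min\{\disc(I), \disc(S \setminus I)\}$, so the whole task reduces to lower-bounding $\disc(S \setminus I)$.

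The plan is to exploit the identity ${\cal X}(I) + {\cal X}(S \setminus I) = {\cal X}(S) = r - b$. Write $x := {\cal X}(I)$ and $y := {\cal X}(S \setminus I)$, so $x + y = r - b$ and $|x| \ge t$. Then $|y| = |(r-b) - x| \ge |x| - |r-b| \ge t - |r-b|$ by the reverse triangle inequality, which immediately gives $\disc(S \setminus I) = |y| \ge t - |r-b|$, and hence $\disc(\Pi) \ge t - |r-b|$ (this is vacuous but still true when $|r-b| \ge t$). That handles one of the two terms in the max. For the other term, I would argue as follows: if $|y| \ge t/2$ we are already done with this partition. Otherwise $|y| < t/2$, and then $|x| = |(r-b) - y|$; but more usefully, consider instead the trivial partition $\Pi' = \{S\}$ — no, that only has discrepancy $|r-b|$. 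The cleaner route: if $\disc(S\setminus I) < t/2$, I claim we can still salvage $t/2$ from the two-block partition by a different choice, or simply observe that the bound $t - |r-b|$ together with a case split on whether $|r-b| \le t/2$ settles it. Concretely, if $|r-b| \le t/2$ then $t - |r-b| \ge t/2$ and the partition $\{I, S\setminus I\}$ already achieves $\disc(\Pi) \ge \min\{t, t/2\} = t/2$; if instead $|r-b| > t/2$, then... this case needs a genuinely different partition, and that is where the real work lies.

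The main obstacle is therefore the regime $|r-b| > t/2$, where $S \setminus I$ can have small discrepancy (as small as roughly $|r-b| - t$, which may be far below $t/2$) and we must still extract a convex partition of discrepancy $\ge t/2$. The idea I would pursue: inside the halfplane $H$, the island $I$ has $\disc(I) \ge t$; by sweeping a parallel line across $H$ we can carve $I$ into two sub-islands $I_1, I_2$ (each the intersection of $S$ with a halfplane, hence each $1$-separable, and each an island with convex hull inside $\mathrm{conv}(I) \subseteq H$) whose discrepancies we control — a discrete intermediate-value argument along the sweep shows we can split so that each of $I_1, I_2$ has discrepancy close to $t/2$, or at least one of them has discrepancy in $[t/2, t]$ while absorbing enough of the surplus. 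Then $\Pi = \{I_1, I_2, S \setminus I\}$ is a convex partition (three pairwise-disjoint convex hulls, since $I_1, I_2 \subseteq H$ are separated by the sweep line and both are separated from $S \setminus I$ by the line bounding $H$), and $\disc(\Pi) = \min\{\disc(I_1), \disc(I_2), \disc(S\setminus I)\} \ge t/2$ provided the sweep can be stopped where both halves of $I$ are at least $t/2$. The delicate point is guaranteeing that such a stopping position exists: as the sweep line moves, $\disc(I_1)$ runs through a near-contiguous range of integers from $0$ up to $\disc(I) \ge t$ while $\disc(I_2)$ runs down from $\disc(I)$ to $0$, and one must check that the "crossing" happens at a value $\ge t/2$, which follows because at the crossing point $\disc(I_1) \approx \disc(I_2)$ and their (signed) sum is $\pm\disc(I) \ge t$ — so each is at least about $t/2$. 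Making this discrete argument precise (handling the $\pm 1$ jumps and the sign bookkeeping between ${\cal X}(I_1)$ and ${\cal X}(I_2)$) is the technical heart; once it is in place, combining with the bound $\disc(S\setminus I) \ge t - |r-b|$ from the first paragraph and taking the better of the two-block and three-block partitions yields $\disc(\Pi) \ge \max\{t/2, t-|r-b|\}$ in all cases.
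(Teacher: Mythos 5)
Your first paragraph and the easy case are exactly right and match the paper: the reverse triangle inequality applied to ${\cal X}(I)+{\cal X}(S\setminus I)=r-b$ gives $\disc(S\setminus I)\ge t-|r-b|$, so the two-block partition $\{I,S\setminus I\}$ already yields $\disc(\Pi)\ge t-|r-b|$, and when $|r-b|\le t/2$ this is at least $t/2$, settling everything.

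The gap is in the remaining regime $|r-b|>t/2$, and it is a real one: your proposed fix attacks the wrong block. The three-block partition $\Pi=\{I_1,I_2,S\setminus I\}$ still contains $S\setminus I$ as a part, and $\disc(\Pi)=\min\{\disc(I_1),\disc(I_2),\disc(S\setminus I)\}\le\disc(S\setminus I)$, which (as you yourself observe) can be as small as roughly $|r-b|-t$, possibly $0$. No amount of cleverness in splitting $I$ — whose discrepancy was never the problem, since $\disc(I)\ge t\ge t/2$ — can raise that minimum. So the sweep-line construction, besides its parity delicacies, simply does not address the obstruction. The irony is that you wrote down the correct answer and discarded it: you note that the trivial partition $\{S\}$ ``only'' has discrepancy $|r-b|$ — but in this case $|r-b|>t/2$, and since $|r-b|>t/2$ forces $t-|r-b|<t/2$, the target $\max\{t/2,\,t-|r-b|\}$ equals $t/2$, which $\disc(S)=|r-b|$ exceeds. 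That one-line observation is exactly how the paper closes the case, and it is the only missing piece; everything after ``this case needs a genuinely different partition'' should be deleted and replaced by it.
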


\begin{proof} 
We have that $\disc(S\setminus I)\ge t-|r-b|$. Indeed, 
\begin{eqnarray*}
\disc(S\setminus I) & = & \Bigl|\bigl(r-|I\cap R|\bigr)-\bigl(b-|I\cap B|\bigr)\Bigr|\\
& = & \Bigl|\bigl(|I\cap R|-|I\cap B|\bigr)-\bigl(r-b\bigr)\Bigr|\\
& \ge & \bigl||I\cap R|-|I\cap B|\bigr|-\bigl|r-b\bigr|\\
& = & \disc(I)-\bigl|r-b\bigr|\\
& \ge & t-|r-b|.
\end{eqnarray*}
If $t-|r-b|\ge t/2$ then for the convex partition $\Pi=\{I,S\setminus I\}$ we have that $\disc(\Pi)\ge t-|r-b|$.
Otherwise, if $t-|r-b|<t/2$, then $\disc(S)=|r-b|>t/2$ which implies that $\disc(\Pi)>t/2$
for the trivial convex partition $\Pi=\{S\}$. The result thus follows.
\end{proof}

\begin{lemma}\label{lem:2-island}
Let $t$ be an integer. If there exists an 
island $I\in\mathcal{I}_2$ of $S$ such that $\disc(I)\ge t$,
then there exists a convex partition $\Pi$ of $S$ such that 
$$\disc(\Pi)\ge\max\left\{t/8,t/4-|r-b|\right\}.$$
\end{lemma}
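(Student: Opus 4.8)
The plan is to reduce the 2-separable case to the 1-separable case of Lemma~\ref{lem:1-island}. Let $I \in \mathcal{I}_2$ with $\disc(I) \ge t$. By definition $I = S \cap (H_1 \cap H_2)$ for two halfplanes $H_1, H_2$, so the bounding lines $\ell_1, \ell_2$ of $H_1, H_2$ cut the plane into at most four regions; write $S$ as the disjoint union of $I = S\cap H_1 \cap H_2$, $A := S \cap H_1 \cap \overline{H_2}$, $B' := S \cap \overline{H_1} \cap H_2$, and $D := S \cap \overline{H_1} \cap \overline{H_2}$. Each of these four sets is a $2$-separable island (an intersection of two halfplanes), but more importantly $I \cup A = S \cap H_1$ and $I \cup B' = S \cap H_2$ are $1$-separable, as are $A \cup D = S \cap \overline{H_2}$ and $B' \cup D = S \cap \overline{H_1}$.

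The key step is a triangle-inequality / averaging argument on the signed sums. Write $x = {\cal X}(I)$, $a = {\cal X}(A)$, $b' = {\cal X}(B')$, $d = {\cal X}(D)$, so $|x| \ge t$. The idea is that at least one of the four "half" quantities $|x+a|$, $|x+b'|$, $|a|$, $|b'|$, $|d|$ (corresponding to $1$-separable islands $S\cap H_1$, $S\cap H_2$, and the single-region islands, noting a single region cut off by one relevant line) must be at least $t/4$: indeed, if all of $|a|, |b'|, |d|$ were less than $t/4$ then $|x + a|\ge |x| - |a| > t - t/4 = 3t/4 \ge t/4$. Hence some $1$-separable island $J$ has $\disc(J) \ge t/4$. (One must check that each of the relevant sets really is in $\mathcal{I}_1$; $A$, $B'$, $D$ as written use two halfplanes, so one should instead argue via $S\cap H_1$, $S\cap H_2$, $S\cap\overline{H_1}$, $S\cap\overline{H_2}$ and the relations $a = {\cal X}(S\cap H_1) - x$ etc., which is where the constant $1/4$ versus $1/2$ is lost.) Applying Lemma~\ref{lem:1-island} with the value $t/4$ in place of $t$ yields a convex partition $\Pi$ with
$$
\disc(\Pi) \ge \max\left\{\frac{t/4}{2},\ \frac{t}{4} - |r-b|\right\} = \max\left\{\frac{t}{8},\ \frac{t}{4} - |r-b|\right\},
$$
as claimed.

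The main obstacle is the bookkeeping in the reduction: making sure that the island on which we get discrepancy $\ge t/4$ is genuinely $1$-separable (intersection of a single halfplane with $S$), rather than an intersection of two halfplanes, and extracting the sharp constant $1/4$ from the case analysis over the four quadrant-regions. Once a $1$-separable island of discrepancy $\ge t/4$ is in hand, the rest is an immediate invocation of Lemma~\ref{lem:1-island}, so the creative content is entirely in this combinatorial splitting step.
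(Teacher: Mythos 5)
Your decomposition into the four quadrant regions is the same starting point as the paper's, but the central claim of your reduction --- that one can always find a \emph{1-separable} island of discrepancy at least $t/4$ --- is false, and the parenthetical fix you sketch does not repair it. Take signed sums $x={\cal X}(I)=t$, $a={\cal X}(A)=-t$, $b'={\cal X}(B')=-t$, $d={\cal X}(D)=t$ (easily realizable by placing suitably colored clusters in the four quadrants of two crossing lines). Then every 1-separable island determined by $H_1,H_2$ has discrepancy zero: ${\cal X}(S\cap H_1)=x+a=0$, ${\cal X}(S\cap H_2)=x+b'=0$, ${\cal X}(S\cap\overline{H_1})=b'+d=0$, ${\cal X}(S\cap\overline{H_2})=a+d=0$. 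Your five-quantity list $|x+a|,|x+b'|,|a|,|b'|,|d|$ does contain a large entry, but the large entries here are $|a|,|b'|,|d|$, which correspond to the 2-separable quadrant islands $A$, $B'$, $D$, not to anything in $\mathcal{I}_1$; so the step ``hence some 1-separable island $J$ has $\disc(J)\ge t/4$'' does not follow, and Lemma~\ref{lem:1-island} cannot be invoked.

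The missing idea is the extra branch the paper uses for exactly this situation: when both $\disc(I')$ and $\disc(I'')$ exceed $t/2$ (in your notation, $|b'|$ and $|a|$ both large), one checks $\disc(I''')=|d|$. If $|d|\ge t/4$, then all four quadrants have discrepancy at least $t/4$ and one uses the convex partition $\Pi=\{I,I',I'',I'''\}$ \emph{directly} --- no reduction to $\mathcal{I}_1$ at all, which is what saves the counterexample above. Only if $|d|<t/4$ does one fall back to a 1-separable island, namely $I'\cup I'''=S\cap\overline{H_1}$, whose discrepancy exceeds $t/2-t/4=t/4$, and then apply Lemma~\ref{lem:1-island}. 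Your proof needs this case split; as written it proves a statement (existence of a high-discrepancy halfplane island) that is simply not true.
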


\begin{proof}
If $I\in\mathcal{I}_1$ then the result follows from Lemma~\ref{lem:1-island}.
Thus consider that $I\in\mathcal{I}_2\setminus\mathcal{I}_1$, and
let $H_1$ and $H_2$ be two halfplanes such that $I=S\cap(H_1\cap H_2)$.
Let $I':=S\cap(\overline{H_1}\cap H_2)$, $I'':=S\cap(H_1\cap\overline{H_2})$, 
and $I''':=S\cap(\overline{H_1}\cap\overline{H_2})$. 
Refer to Figure~\ref{img:fig1}.

\begin{figure}[h]
	\centering
	\includegraphics[scale=0.65]{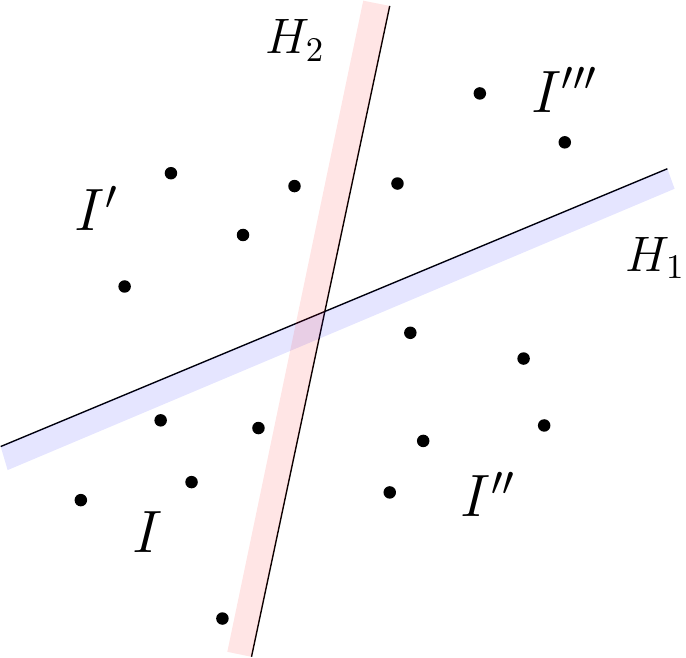}
	\caption{\small{Proof of Lemma~\ref{lem:2-island}: The island $I$ belongs to 
	$\mathcal{I}_2\setminus\mathcal{I}_1$, then it is the intersection of $S$
	with two halfplanes $H_1$ and $H_2$.}}
	\label{img:fig1}
\end{figure}

If $\disc(I')\leq t/2$
then the island $I\cup I'\in\mathcal{I}_1$ satisfies $\disc(I\cup I')\ge t/2$ and
by Lemma~\ref{lem:1-island} there exists a convex partition $\Pi_1$ such that
\begin{equation}\label{eq1}
\disc(\Pi_1)\ge\max\left\{t/4,t/2-|r-b|\right\}.
\end{equation}
The same happens if $\disc(I'')\leq t/2$.
Otherwise, if $\disc(I')>t/2$ and $\disc(I'')>t/2$, then we proceed as follows.
If $\disc(I''')\ge t/4$ then the convex partition $\Pi_2=\{I,I',I'',I'''\}$ satisfies
\begin{equation}\label{eq2}
\disc(\Pi_2)\ge t/4.
\end{equation}
Otherwise, we have that the island $I'\cup I'''\in\mathcal{I}_1$ satisfies $\disc(I'\cup I''')>t/4$ and then,
by Lemma~\ref{lem:1-island}, there exists a convex partition $\Pi_3$ such that
\begin{equation}\label{eq3}
\disc(\Pi_3)\ge\max\left\{t/8,t/4-|r-b|\right\}.
\end{equation}
Combining equations (\ref{eq1}-\ref{eq3}) the result follows.
\end{proof}

\begin{lemma}\label{lem:Dkmas1-Dk}
$D_3\leq 4 D_2$, and $D_{k+1}\leq 2D_k$ for $k\geq 3$.
\end{lemma}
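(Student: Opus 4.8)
The plan is to mimic the inductive structure of the previous two lemmas, bootstrapping from a $(k+1)$-separable island down to a $k$-separable (or smaller) one by merging one of the halfplanes used in the separation with its complement. Concretely, suppose $I\in\mathcal{I}_{k+1}\setminus\mathcal{I}_k$ with $\disc(I)\ge t$, and write $I=S\cap(H_1\cap\dots\cap H_{k+1})$. Let $J:=S\cap(H_1\cap\dots\cap H_k)$, so $J\in\mathcal{I}_k$ and $I\subseteq J$; set $I':=J\setminus I=S\cap(H_1\cap\dots\cap H_k\cap\overline{H_{k+1}})$, which is the intersection of $S$ with $k+1$ halfplanes and hence lies in $\mathcal{I}_{k+1}$, but more importantly we will only ever pass it back through a single extra halfplane. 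If $\disc(I')\le t/2$ then $\disc(J)\ge\disc(I)-\disc(I')\ge t/2$, and $J\in\mathcal{I}_k$, so $D_k\ge t/2$. Otherwise $\disc(I')>t/2$; here I would observe that $I'$ together with $I$ partitions $J$, and since $I'$ itself is $S$ intersected with $k$ halfplanes of $J$'s description plus $\overline{H_{k+1}}$, a careful choice shows $I'\in\mathcal{I}_k$ as well (the complement of one halfplane is a halfplane), giving $D_k\ge\disc(I')>t/2$. Taking $t=D_{k+1}$ throughout yields $D_k\ge D_{k+1}/2$, i.e. $D_{k+1}\le 2D_k$.

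For the base case $D_3\le 4D_2$ the same split is available but one of the two pieces $J$ or $I'$ may only be guaranteed to lie in $\mathcal{I}_2$ (not $\mathcal{I}_1$), which is fine, or may even require an extra subdivision in the bad branch — this is where the weaker factor $4$ rather than $2$ comes from. I would handle it exactly as in the proof of Lemma~\ref{lem:2-island}: start from $I\in\mathcal{I}_3\setminus\mathcal{I}_2$ with $\disc(I)\ge t$, write $I=S\cap(H_1\cap H_2\cap H_3)$, and let $I':=S\cap(H_1\cap H_2\cap\overline{H_3})$. If $\disc(I')\le t/2$ then $I\cup I'=S\cap(H_1\cap H_2)\in\mathcal{I}_2$ has discrepancy at least $t/2$, so $D_2\ge t/2$. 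If $\disc(I')>t/2$, then since $I'\in\mathcal{I}_3$ we cannot directly conclude, so instead split $I'$ further using one of its defining halfplanes against $S\setminus I'$, producing two pieces one of which is in $\mathcal{I}_2$ and carries discrepancy at least $t/4$; alternatively the relevant merged region is a $2$-separable island of discrepancy $>t/4$. In every branch we extract a $2$-separable island of discrepancy at least $t/4$, so $D_2\ge t/4$, and with $t=D_3$ this gives $D_3\le 4D_2$.

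The main obstacle I anticipate is the bookkeeping that verifies each region I want to hand back to $D_k$ or $D_2$ really is $k$-separable (resp.\ $2$-separable), and not accidentally $(k+1)$- or $3$-separable; the point is that $\overline{H}$ is itself a halfplane, so replacing a constraint $H_i$ by $\overline{H_i}$ does not increase the number of halfplanes, and merging $I$ with $I'$ deletes a constraint entirely. A secondary subtlety is the triangle-inequality step $\disc(A\cup B)\ge\disc(A)-\disc(B)$ for disjoint $A,B$, which is immediate from $\mathcal{X}(A\cup B)=\mathcal{X}(A)+\mathcal{X}(B)$ but should be invoked cleanly. Once these are in place the inductive claim $D_{k+1}\le 2D_k$ for $k\ge 3$ follows by the single-halfplane-merge argument above, and the base step $D_3\le 4D_2$ follows from the two-level argument, completing the proof.
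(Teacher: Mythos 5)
Your overall strategy (peel off one halfplane and play $\disc(I)$, $\disc(I')$ and $\disc(I\cup I')$ against each other via the triangle inequality) is the paper's strategy, but in both places where you must certify separability of the leftover piece there is a genuine gap. In the inductive step you set $I':=S\cap(H_1\cap\dots\cap H_k\cap\overline{H_{k+1}})$ and, in the branch $\disc(I')>t/2$, assert $I'\in\mathcal{I}_k$ because ``the complement of one halfplane is a halfplane.'' That reasoning only shows $I'\in\mathcal{I}_{k+1}$: you are not \emph{replacing} a constraint by its complement, you are \emph{adding} $\overline{H_{k+1}}$ to the $k$ constraints defining $J$, so the count is $k+1$. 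The conclusion $I'\in\mathcal{I}_k$ is in fact true, but for a geometric reason you never invoke: after ordering the halfplanes so that consecutive ones contribute adjacent edges of the polygon $H_1\cap\dots\cap H_{k+1}$, the region $H_1\cap\dots\cap H_k\cap\overline{H_{k+1}}$ is the corner of $H_1\cap\dots\cap H_k$ cut off by $\partial H_{k+1}$, on which only the two adjacent constraints $H_1$ and $H_k$ remain binding; hence $I'=S\cap(H_1\cap H_k\cap\overline{H_{k+1}})\in\mathcal{I}_3\subseteq\mathcal{I}_k$. This is precisely where the hypothesis $k\ge 3$ enters (and why the paper defines $I'$ with three halfplanes from the outset); without it your bad branch does not close.

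The base case has a larger hole. When $\disc(I')>t/2$ with $I'=S\cap(H_1\cap H_2\cap\overline{H_3})$, you propose to ``split $I'$ further\dots producing two pieces one of which is in $\mathcal{I}_2$,'' but $I'$ is genuinely $3$-separable, and the obvious splits either hand you back $I$ itself (whose union with $I'$ is exactly the island you already know may have small discrepancy) or produce yet another $3$-separable island of possibly large discrepancy, so a naive recursion does not terminate. The paper escapes by introducing a specific second region $I'':=S\cap(\overline{H_1}\cap\overline{H_3})$, which is $2$-separable by construction, together with the observation that (in the configuration where $H_1\cap H_2\cap H_3$ is a triangle, so that $\overline{H_1}\cap\overline{H_2}\cap\overline{H_3}=\emptyset$) the union $I'\cup I''$ traces the $2$-separable region $H_2\cap\overline{H_3}$; then either $\disc(I'')\ge t/4$ or $\disc(I'\cup I'')>\disc(I')-\disc(I'')>t/4$, and in every branch a $2$-separable island of discrepancy at least $t/4$ is exhibited. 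This construction, and the verification that $I''$ and $I'\cup I''$ lie in $\mathcal{I}_2$, is the actual content of the bound $D_3\le 4D_2$ and is absent from your sketch.
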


\begin{proof}
Let an island $I\in\mathcal{I}_3$ such that $D_3=\disc(I)$. If $I\in\mathcal{I}_2$ then
$D_3=D_2$ since $\mathcal{I}_2\subseteq\mathcal{I}_3$. Otherwise, 
we have $I\in\mathcal{I}_3\setminus\mathcal{I}_2$. Then, let $H_1$, $H_2$, and $H_3$
be three halfplanes such that $I=S\cap (H_1\cap H_2\cap H_3)$. 
Let $I':=S\cap(H_1\cap H_2\cap\overline{H_3})$ and $I'':=S\cap(\overline{H_1}\cap\overline{H_3})$,
and observe that $I\cup I'$, $I'\cup I''$, and $I''$ belong to $\mathcal{I}_2$.
Refer to Figure~\ref{img:fig2}.

\begin{figure}[h]
	\centering
	\includegraphics[scale=0.65]{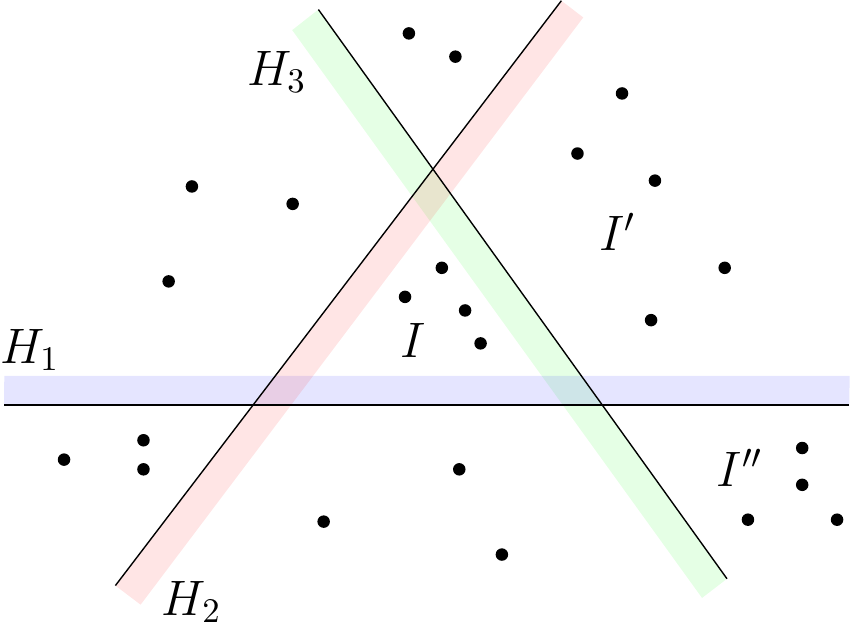}
	\caption{\small{Proof of Lemma~\ref{lem:Dkmas1-Dk}: The island $I$ belongs to 
	$\mathcal{I}_3\setminus\mathcal{I}_2$, then it is the intersection of $S$
	with three halfplanes $H_1$, $H_2$, and $H_3$.}}
	\label{img:fig2}
\end{figure}

If $\disc(I')\leq D_3/2$ then we have
$D_3/2\le \disc(I\cup I')\le D_2$ which implies $D_3\le 2D_2$.
Otherwise, if $\disc(I')> D_3/2$, we proceed as follows. 
If $\disc(I'')\ge D_3/4$ then we have $D_3\le 4D_2$. 
Otherwise, if $\disc(I'')< D_3/4$, then $\disc(I'\cup I'')>D_3/4$ implying $D_3<4D_2$. 
Then we have proved $D_3\leq 4 D_2$.

To prove the other part of the lemma, let $I\in\mathcal{I}_{k+1}$ such that 
$\disc(I)=D_{k+1}$. If $I\in\mathcal{I}_k$ then
$D_{k+1}=D_k$ since $\mathcal{I}_k\subseteq\mathcal{I}_{k+1}$. Otherwise, if 
$I\in\mathcal{I}_{k+1}\setminus\mathcal{I}_k$, let $H_1,H_2,\ldots,H_{k+1}$ be
$k+1$ halfplanes such that $I=S\cap(H_1\cap H_2\cap\ldots\cap H_{k+1})$. 
Refer to Figure~\ref{img:fig3}.

\begin{figure}[h]
	\centering
	\includegraphics[scale=0.65]{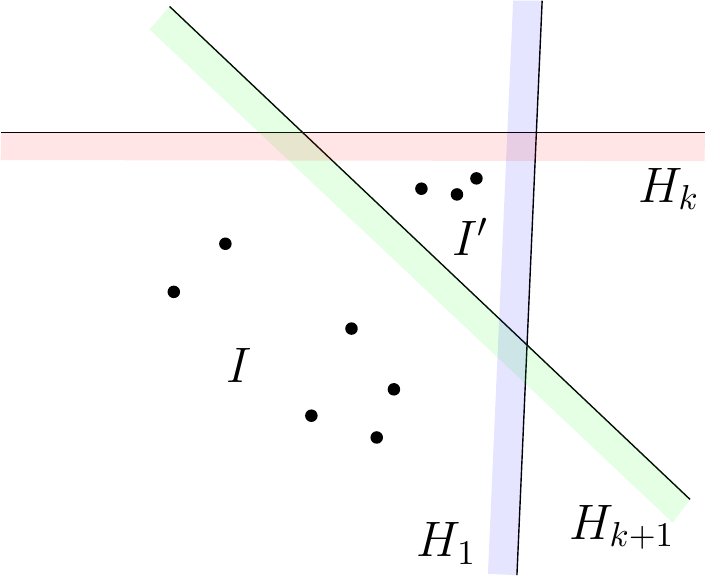}
	\caption{\small{Proof of Lemma~\ref{lem:Dkmas1-Dk}: The island $I$ belongs to 
	$\mathcal{I}_{k+1}\setminus\mathcal{I}_k$, then $I$ is the intersection of $S$
	with $k+1$ halfplanes $H_1,\ldots,H_{k+1}$. The island $I'$ is such that $I\cup I'$
	is the intersection of $S$ with the halfplanes $H_1,\ldots,H_{k}$.}}
	\label{img:fig3}
\end{figure}

Assume w.l.o.g.\ that
the edges of the polygon $H_1\cap H_2\cap\ldots\cap H_{k+1}$ in clockwise order
belong to the boundary of $H_1,H_2,\ldots,H_{k+1}$,
respectively. 
Let $I':=S\cap(H_k\cap\overline{H_{k+1}}\cap H_1)\in\mathcal{I}_3$, and
observe that $I\cup I'\in\mathcal{I}_k$.
If $\disc(I')\ge D_{k+1}/2$ then $D_{k+1}\le 2\disc(I')\le 2D_3\le 2D_{k}$.
Otherwise, we have that $\disc(I\cup I')>D_{k+1}/2$ which implies that $D_{k+1}<2D_k$.
\end{proof}

\begin{theorem}\label{theo:approx}
$\max\left\{\frac{\mathcal{C}(S)}{128},\frac{\mathcal{C}(S)}{64}-|r-b|\right\}
\leq\max\left\{\frac{D_2}{8},\frac{D_2}{4}-|r-b|\right\}\leq\mathcal{C}(S)$.
\end{theorem}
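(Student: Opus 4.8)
The plan is to prove the two inequalities of Theorem~\ref{theo:approx} separately, since the middle quantity $\max\{D_2/8,\,D_2/4-|r-b|\}$ must be sandwiched between two expressions involving $\mathcal{C}(S)$.

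For the \emph{upper bound} $\max\{D_2/8,\,D_2/4-|r-b|\}\le\mathcal{C}(S)$, the key observation is that every island $I\in\mathcal{I}_2$ is in particular a valid island of $S$, so by Lemma~\ref{lem:2-island}, taking $t=\disc(I)$, there is a convex partition $\Pi$ with $\disc(\Pi)\ge\max\{\disc(I)/8,\,\disc(I)/4-|r-b|\}$. Choosing $I$ to be the island realizing $D_2=\disc(I)$ and recalling that $\mathcal{C}(S)$ is the maximum of $\disc(\Pi)$ over all convex partitions, we immediately get $\mathcal{C}(S)\ge\max\{D_2/8,\,D_2/4-|r-b|\}$, since $x\mapsto\max\{x/8,x/4-|r-b|\}$ is nondecreasing in $x$.

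For the \emph{lower bound} $\max\{\mathcal{C}(S)/128,\,\mathcal{C}(S)/64-|r-b|\}\le\max\{D_2/8,\,D_2/4-|r-b|\}$, I would argue that $\mathcal{C}(S)$ is controlled by $D_5$, and then collapse $D_5$ down to $D_2$ using Lemma~\ref{lem:Dkmas1-Dk}. First, by Theorem~\ref{thm:5sep}, every convex partition $\Pi$ of $S$ contains a $5$-separable island, whose discrepancy is at least $\disc(\Pi)$ by definition of $\disc(\Pi)$; hence $\disc(\Pi)\le D_5$ for every $\Pi$, so $\mathcal{C}(S)\le D_5$. Next, applying Lemma~\ref{lem:Dkmas1-Dk} repeatedly gives $D_5\le 2D_4\le 4D_3\le 16D_2$, so $\mathcal{C}(S)\le 16D_2$. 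Substituting $D_2\ge\mathcal{C}(S)/16$ into the right-hand side: the first term gives $D_2/8\ge\mathcal{C}(S)/128$, and the second gives $D_2/4-|r-b|\ge\mathcal{C}(S)/64-|r-b|$; taking the maximum of both sides (again using monotonicity) yields the claimed inequality.

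The main subtlety — really the only place one must be careful — is the bookkeeping of constants in the chain $\mathcal{C}(S)\le D_5\le 16D_2$ and making sure the function $\max\{x/8,\,x/4-c\}$ is genuinely monotone in $x$ so that the substitution $D_2\ge\mathcal{C}(S)/16$ can be pushed through both branches of the maximum simultaneously. Everything else follows directly from the already-established lemmas, so no new geometric or combinatorial argument is needed; the proof is just a careful assembly of Lemmas~\ref{lem:2-island} and~\ref{lem:Dkmas1-Dk} together with Theorem~\ref{thm:5sep}.
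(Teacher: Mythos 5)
Your proof is correct and follows essentially the same route as the paper: the upper bound via Lemma~\ref{lem:2-island} applied to the island realizing $D_2$, and the lower bound via the chain $\mathcal{C}(S)\leq D_5\leq 2D_4\leq 4D_3\leq 16D_2$ from Theorem~\ref{thm:5sep} and Lemma~\ref{lem:Dkmas1-Dk}. The monotonicity observation you flag is the only detail the paper leaves implicit, and it is easily checked.
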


\begin{proof}
Observe that $\max\left\{\frac{D_2}{8},\frac{D_2}{4}-|r-b|\right\}\leq\mathcal{C}(S)$
follows from Lemma~\ref{lem:2-island}. Since any convex partition of $S$
has a 5-separable island, and using Lemma~\ref{lem:Dkmas1-Dk}, we have
that $\mathcal{C}(S)\leq D_5\leq 2D_4\leq 4D_3\leq 16D_2$. With these facts 
the result follows.
\end{proof}

\begin{theorem}
There exists a polynomial time constant-approximation algorithm
for computing $\mathcal{C}(S)$.
\end{theorem}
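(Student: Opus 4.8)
The plan is to combine Theorem~\ref{theo:approx} with an efficient procedure to compute the quantity $D_2=\max_{I\in\mathcal{I}_2}\disc(I)$ exactly, and then to exhibit the convex partition realizing the approximation. First I would observe that the output value of the algorithm is $\max\left\{\frac{D_2}{8},\frac{D_2}{4}-|r-b|\right\}$, which by Theorem~\ref{theo:approx} is sandwiched between $\max\left\{\frac{\mathcal{C}(S)}{128},\frac{\mathcal{C}(S)}{64}-|r-b|\right\}$ and $\mathcal{C}(S)$; hence it is a constant-factor approximation in the sense stated in the introduction. So the whole task reduces to: (i) compute $D_2$ in polynomial time, and (ii) if one also wants an explicit convex partition, trace through the constructive proofs of Lemma~\ref{lem:2-island} and Lemma~\ref{lem:1-island} to produce it.

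For step (i), I would argue that $D_2$ is attained by an island of the form $I=S\cap(H_1\cap H_2)$, i.e.\ the intersection of $S$ with a (possibly unbounded) wedge or slab bounded by at most two lines. By general position we may assume each bounding line passes through two points of $S$, and each line can be perturbed to pass through $O(n^2)$ combinatorially distinct positions; the apex/orientation of the wedge contributes another polynomial factor. Thus there are only $O(n^{4})$ or so combinatorially distinct $2$-separable islands, each of whose discrepancy is computed in $O(n)$ time, giving a crude $O(n^{5})$ bound. A cleaner way is to note that a $2$-separable island is cut out by two halfplanes, so it suffices to enumerate all pairs of lines through pairs of points of $S$, and for each of the four wedge-orientations evaluate $\disc$; standard duality / sweeping can bring this down further, but any polynomial bound suffices for the statement. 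Since $\mathcal{I}_1\subseteq\mathcal{I}_2$, single-halfplane islands are included automatically, so the maximum over this enumeration is exactly $D_2$.

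For step (ii), the convex partition achieving discrepancy at least $\max\{D_2/8,\,D_2/4-|r-b|\}$ is obtained constructively: take the optimal $I\in\mathcal{I}_2$ found above, follow the case analysis in the proof of Lemma~\ref{lem:2-island} (test $\disc(I'),\disc(I''),\disc(I''')$ against the relevant thresholds, each test costing $O(n)$ time), and in each branch invoke the construction in the proof of Lemma~\ref{lem:1-island}, which merely outputs either $\{J,S\setminus J\}$ for an explicit halfplane-island $J$ or the trivial partition $\{S\}$. Every geometric primitive here — forming $\overline{H_i}$, intersecting with $S$, counting reds and blues — is elementary, so the overall running time stays polynomial.

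The main obstacle, and the only place that needs care, is step (i): pinning down precisely why the maximum-discrepancy $2$-separable island lies in a finite, polynomially sized family of combinatorial types, and handling unbounded wedges and slabs (where one or both bounding lines ``go to infinity'') uniformly with bounded ones. Once that enumeration is justified, the rest is bookkeeping. I would therefore devote the bulk of the write-up to the claim that $\mathcal{I}_2$ has $O(n^{4})$ combinatorially distinct members and that each discrepancy evaluation takes linear time, and then simply cite Theorem~\ref{theo:approx} for correctness of the approximation ratio and Lemmas~\ref{lem:1-island}--\ref{lem:2-island} for the construction of the witnessing convex partition.
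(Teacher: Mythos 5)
Your proposal is correct and follows essentially the same route as the paper: reduce the problem to computing $D_2$ in polynomial time and then invoke Theorem~\ref{theo:approx} for the approximation guarantee. The only difference is that the paper simply cites the $O(n^3\log n)$ algorithm of Dobkin and Gunopulos for computing $D_2$, whereas you substitute a self-contained (and perfectly valid) brute-force enumeration of the $O(n^4)$ combinatorially distinct $2$-separable islands; your additional step of extracting the witnessing convex partition from the proofs of Lemmas~\ref{lem:1-island} and~\ref{lem:2-island} is sound but not required for the statement.
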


\begin{proof}
The value of $D_2$, equal to the 
discrepancy of a $2$-separable island of maximum discrepancy,
can be computed in $O(n^3\log n)$ time~\cite{dobkin1995}. The result
then follows from Theorem~\ref{theo:approx}. 
\end{proof}

\section{Conclusions}

We proved that the discrepancy of the family of all $k$-separable islands 
is upper bounded
by $O(n^{1/4}\sqrt{\log n})$, by showing that its dual shatter function
$\pi_{\mathcal{I}_k}^*(m)$ is upper bounded by $O(m^2)$. 
It is known that 
the dual shatter function bound can be tight for some range spaces
(see~\cite{matousek-disc}). It is not hard to see that the 
primal shatter function of the $k$-separable islands of point sets 
in convex position
is lower bounded by $\Omega(m^k)$.
So the primal shatter function bound can be arbitrarily worse than
the dual shatter function bound in this case. It is also interesting
to note that the discrepancy of $1$-separable islands (or halfplanes)
is upper bounded by $O(n^{1/4})$. We leave the exact (asymptotic)
computation of the discrepancy of $k$-separable islands
as an open problem.

Using the fact that every convex partition of a point set $S$ has 
an island (in this case a $5$-separable island)
of low discrepancy, we showed that every $n$-point set
in general position in the plane can be two-colored so that the coarseness is
upper bounded by $O(n^{1/4}\sqrt{\log n})$. However, Theorem~\ref{lem:edels}
provides more information; for any positive constant $c<1$ there exists
a positive integer $k_c$ (depending only on $c$), so that in
every convex partition of $S$ into $m$ islands at least $cm$ of them
are $c_k$-separable (and thus have small discrepancy). 
We think that computing the exact asymptotic value
of the above bound on the coarseness of point sets is an interesting 
(and hard) open problem.

We further showed the first approximation
for computing the coarseness of a colored point set,
which is believed to be NP-hard. Our approximation is based on a known
algorithm. Proving the hardness of this
problem remains open, and giving improved approximations as well.

\section{Acknowledgments}

The problems studied here were introduced and partially solved 
during a visit to University of Valparaiso funded by project Fondecyt 11110069 (Chile).
The authors would like to thank an anonymous referee for helpful comments.

\small

\bibliographystyle{plain}
\bibliography{disc-islands}

\end{document}